\algnewcommand\algorithmicinput{\textbf{Input:}}
\algnewcommand\INPUT{\item[\algorithmicinput]}
	\tikzstyle{block} = [rectangle, rounded corners, minimum width=3cm, minimum height=1cm,text centered, draw=black, fill=red!30]
	\tikzstyle{new} = [rectangle, rounded corners, minimum width=1cm, minimum
	\tikzstyle{arrow} = [thick,->,>=stealth]
\DeclareFontFamily{OT1}{pzc}{}
\DeclareFontShape{OT1}{pzc}{m}{it}{<-> s * [1.200] pzcmi7t}{}
\DeclareMathAlphabet{\mathpzc}{OT1}{pzc}{m}{it}
  \DeclareMathAlphabet\PazoBB{U}{fplmbb}{m}{n}%
\algrenewcommand\textproc{}%
\let\oldthempfootnote\thempfootnote
\def\thempfootnote{\text{\oldthempfootnote}}
\setlist[itemize]{leftmargin=*}
\setlist[enumerate]{leftmargin=*}
\colorlet{lightgray}{green!4}
\DeclareMathAlphabet\mathbfcal{OMS}{cmsy}{b}{n}
\DeclareMathOperator*{\rank}{rank}
\let\oldsim\sim
\renewcommand{\sim}{\overset{e}{\oldsim}}
\definecolor{kkGreen}{RGB}{201,232,206}
\definecolor{kkRed}{RGB}{255,196,215}
\definecolor{kkBlue}{RGB}{214,226,255}
\newcommand{\Q}{Q}
\newcommand{\norm}[1]{\left\lVert#1\right\rVert}
\DeclareMathOperator{\tr}{tr}
\DeclareMathOperator{\Stiefel}{St}
\DeclareMathOperator{\rgrad}{grad}
\DeclareMathOperator{\proj}{Proj}
\DeclareMathOperator{\BDiag}{BlockDiag}
\newcommand{\@chapapp}{\relax}%
\begin{document}

\mainmatter              
\title{Block-Coordinate Minimization for Large SDPs with Block-Diagonal Constraints}
\titlerunning{}  
%
\author{Yulun Tian
\and Kasra Khosoussi
\and Jonathan P.~How}
\authorrunning{Tian et al.} 

\institute{Laboratory for Information and Decision Systems\\Massachusetts
	Institute of Technology\\Cambridge, MA, USA.\\
	\email{\{yulun,kasra,jhow\}@mit.edu}}

\maketitle              

\begin{abstract}
  The so-called Burer-Monteiro method is a well-studied technique for solving
  large-scale semidefinite programs (SDPs) via low-rank factorization. The main idea is to solve rank-restricted, albeit \emph{non}-convex, surrogates instead of the original
  SDP. Recent works have shown that, in an important class of SDPs with an
  elegant geometric structure, one can find globally optimal solutions to the
  SDP by finding rank-deficient second-order critical points of an unconstrained
  Riemannian optimization problem. Hence, in such problems, the Burer-Monteiro approach can
  provide a scalable and reliable alternative to interior-point methods that
  scale poorly.
  Among various Riemannian optimization methods proposed, block-coordinate
  minimization (BCM) is of particular interest due to its simplicity.
  \citet{ErdogduBCM2018} in their recent work proposed BCM for problems over the
  Cartesian product of unit spheres and 
  provided global convergence rate estimates for the algorithm.
  This report extends the BCM algorithm and the global convergence
  rate analysis of
  \citet{ErdogduBCM2018} from problems over the Cartesian product of unit spheres
  to the Cartesian product of Stiefel manifolds. The latter more general setting
  has important applications such as synchronization over the special
  orthogonal (SO) and special Euclidean (SE) groups.
\end{abstract}

\subsection*{Notations and Preliminaries}
For a $dn \times dn$ matrix $X$ formed by blocks of size $d \times d$,
$X_{[i,j]} \in \mathbb{R}^{d \times d}$ refers to its $(i,j)$-th block.
$\norm{\cdot}_*$ is the matrix nuclear norm.
$\langle \cdot, \cdot \rangle$ is the Frobenius inner product.
The Stiefel manifold is defined as $\Stiefel(d,r) \triangleq \{Y \in
  \mathbb{R}^{r \times d}: Y^\top Y = {I}_d\}$ and is equipped with the
  Riemannian metric induced by its $\mathbb{R}^{r \times
  d}$ embedding.
Define the product manifold $\Stiefel(d,r)^n \triangleq \{Y =
\begin{bmatrix}
Y_1 & Y_2 & \hdots & Y_n
\end{bmatrix} \in \mathbb{R}^{r \times dn}
: Y_i \in \Stiefel(d, r)\}$.
Given a function $F: \Stiefel(d,r)^n \to \mathbb{R}$,
$\nabla F$
and $\rgrad F$ are the ambient Euclidean and Riemannian gradient of $F$, respectively.
We have that $\rgrad F(Y) = \proj_Y (\nabla F(Y))$, where
$\proj$ is the orthogonal projection operator onto the tangent space of
the product manifold at $Y \in \Stiefel(d,r)^n$ \cite{absil2009optimization}.

\section{Problem Formulation}
We are interested in solving large-scale SDPs with the following structure.
\begin{problem}[SDP] Let $\Q \in \mathbb{R}^{dn \times dn}$.
\begin{equation}
	\normalfont
	\begin{aligned}
		& \underset{}{\text{minimize}}
		& & \tr(\Q X), \\
		& \text{subject to}
		&&	X_{[i,i]} = {I}_d, \, \forall i \in [n], \\
		  &   &   & X \succeq 0.
	\end{aligned}
	\label{eq:sdp}
\end{equation}
\label{prob:sdp}
\end{problem}

Problem~\ref{prob:sdp} arises frequently as SDP relaxations of
important non-convex or combinatorial optimization problems. A comprehensive list of
applications can be found in \cite{BoumalBlockDiagonal}. In particular, for
$d=1$, notable examples include Max-Cut \cite{GoemansMaxCutSDP}, MAP inference on
graphical models \cite{FrostigMRFSDP}, and community detection
\cite{Bandeira2016}. For $d > 1$, examples include pose-graph optimization
 (synchronization over the special Euclidean group) \cite{Rosen2016Certifiably},
rotation synchronization (synchronization over the special orthogonal group) \cite{WangRotationSync}, phase synchronization \cite{SingerPhaseSync2011},
and spherical embedding \cite{WilsonSphereEmbedding}.
In many of these applications, the problem size (i.e., $d \times n$) can be
quite large. This is the case, e.g., in pose-graph optimization where
$d \times n$ typically well exceeds $10^4$.
Interior-point methods are often impractical for solving such large-scale instances
of Problem~\ref{prob:sdp}. To address this
issue, \citet{Burer2003ANP} propose to impose a low-rank factorization on the
decision variable $X$, and solve the resulting non-convex problem; see
Problem~\ref{prob:riemannian} below. Note that Problem~\ref{prob:sdp} is
equivalent to Problem~\ref{prob:riemannian} after introducing an additional non-convex constraint
$\rank(X) \leq r$.

\begin{problem}[rank-restricted SDP, Riemannian optimization form]
\begin{equation}
	\normalfont
	\begin{aligned}
		& \underset{}{\text{minimize}}
		& & \tr(\Q Y^\top Y), \\
		& \text{subject to}
		&&	Y \in \Stiefel(d,r)^n.
	\end{aligned}
	\label{eq:riemannian}
\end{equation}
\label{prob:riemannian}
\end{problem}

\begin{remark}
  Following \cite{WangBCM2017,ErdogduBCM2018}, we assume without loss of generality that $Q$ is
symmetric and $Q_{[i,i]} = {0}_d, \forall i \in [n]$. If $Q$ is not symmetric, replacing it with
its symmetric part $\frac{1}{2}(Q + Q^\top)$ does not change the
objective value, since $X = Y^\top Y$ is symmetric. In addition, setting each
$Q_{[i,i]} = {0}_d$ only decreases the objective by a constant value
$\tr(Q_{[i,i]})$.
\end{remark}

It has been established that if $Y$ is a rank-deficient second-order critical point of
Problem~\ref{prob:riemannian}, then it is a \emph{global} minimizer of
Problem~\ref{prob:riemannian}, and furthermore $X = Y^\top Y$ will be a globally optimal solution for
Problem~\ref{prob:sdp}; see \cite{boumal2018deterministicbm} and references
therein. This has motivated the use
of Riemiannian optimization methods for finding \emph{local} solutions to the unconstrained Riemannian
optimization problem (Problem~\ref{prob:riemannian}).
Block-coordinate minimization (BCM) methods, among others, have been proposed for
solving Problem~\ref{prob:riemannian} on the Cartesian product of unit spheres
$\Stiefel(1,r)^n$ \cite{ErdogduBCM2018, WangBCM2017,
Javanmard2015}.
This paper closely follows the recent work of \citet{ErdogduBCM2018} and extends
the BCM algorithm and its \emph{global} convergence rate
analysis to cover the more general case of Cartesian product of Stiefel
manifolds $\Stiefel(d,r)^n$ with arbitrary $d \leq
r$.

\section{Algorithm}
In this section, we present the generalized BCM algorithm for the product manifold $\Stiefel(d,r)^n$. Note that in Problem~\ref{prob:riemannian}, 
the overall cost function can be decomposed as $F(Y) = \tr(\Q Y^\top Y) = \sum_{i=1}^n F_i(Y_i)$, where
the contribution of a single variable $Y_i$ is,
\begin{equation}
  F_i(Y_i) =  \langle G_i, Y_i \rangle, \, G_i \triangleq \sum_{j \in [n]
  \setminus i }Y_j\Q_{[j,i]}.
\end{equation}
We make the crucial observation that after fixing all other variables $Y_j$ where $j \neq i$, the problem of minimizing $F_i(Y_i)$ subject to $Y_i \in \Stiefel(d,r)$ admits a closed-form solution.
Let $U_i\Sigma_iV_i^\top$ be the singular value decomposition of $-G_i$. Then the optimal $Y_i$ is given by
$Y_i^\star = U_i {I}_{r \times d}V_i^\top$ \cite[Theorem 2.1]{Lai2014}.
This motivates the following block-coordinate method, outlined in Algorithm~\ref{alg:BCM}, for solving Problem~\ref{prob:riemannian}. Following \cite{ErdogduBCM2018}, we consider two sampling schemes related to the choice of $p_i$:
\begin{itemize}
\item[$\bullet$] Uniform sampling: $p_i = 1/n, \forall i \in [n]$.
\item[$\bullet$] Importance sampling: $p_i = \norm{G_i}_* / \sum_{j=1}^n \norm{G_j}_*, \forall i \in [n]$.
\end{itemize}

\begin{algorithm}[h]
	\caption{\textsc{\small Block-Coordinate Minimization (BCM) for Problem~\ref{prob:riemannian}}}
	\label{alg:BCM}
	\begin{algorithmic}[1]
		\State Initialize $Y_i^0 \in \Stiefel(r,d), \forall i \in [n]$. Compute
		$G_i^0 = \sum_{j \in [n] \setminus i}Y_jQ_{[j,i]}, \forall i \in [n]$.
		\For{$k=0,1,\hdots$}
		   \State Randomly select $i_k = i$ with probability $p_i, \forall i \in [n]$.
		   \State $Y_{i_k}^{k+1} \leftarrow U_{i_k} {I}_{r \times d}V_{i_k}^\top$, where $U_{i_k}\Sigma_{i_k}V_{i_k}^\top$ is the SVD of $-G_{i_k}^k$.
		   \State $G_{i}^{k+1} \leftarrow G_{i}^k - Y_{i_k}^k\Q_{[i_k,i]} + Y_{i_k}^{k+1}\Q_{[i_k,i]}, \forall i \neq i_k$.
	    \EndFor
	\end{algorithmic}
\end{algorithm}

\section{Global Convergence Analysis}
In this section, we extend the global convergence rate analysis provided by
\citet{ErdogduBCM2018}. Specifically, we show that the established global convergence rate estimates for BCM and the associated proof techniques can be generalized from the Cartesian product of spheres $\Stiefel(1,r)^n$ to the Cartesian product of Stiefel manifolds
$\Stiefel(d,r)^n$, for any $d \leq r$.
Interestingly, our results reduce exactly to the corresponding technical statements in \cite{ErdogduBCM2018} after setting $d=1$.

Recall that in Algorithm~\ref{alg:BCM}, each iteration minimizes the
contribution of a single variable block $Y_{i_k}$ to the cost function while keeping the other blocks fixed.
Thus the sequence of iterates generated by BCM will yield nonincreasing cost
values. The following lemma confirms that this is indeed the case, and
furthermore, quantifies the cost decrease in terms of $Y_i$'s and $G_i$'s.

\begin{lemma}
  Define $F(Y) = \tr(\Q Y^\top Y) = \sum_{i=1}^n F_i(Y_i) = \sum_{i=1}^n \langle G_i, Y_i \rangle$. Let $Y^k$ denote the value of $Y$ at the $k$th iteration of Algorithm~\ref{alg:BCM}. Each iteration of BCM yields a descent on the cost function:
\begin{equation}
F(Y^{k+1}) - F(Y^k) = -2\left(\norm{G_{i_k}^k}_* + \langle G_{i_k}^k, Y_{i_k}^k
\rangle\right) \leq 0.
\label{eq:BCM_descent}
\end{equation}
\label{lem:BCM_descent}
\end{lemma}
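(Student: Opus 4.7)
\medskip
\noindent\textbf{Proof proposal.} The plan is to isolate the portion of $F$ that depends on the updated block $Y_{i_k}$, substitute the closed-form minimizer used in Algorithm~\ref{alg:BCM}, and reduce the resulting identity to a nuclear/spectral norm duality.

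First I would observe that $G_{i_k}^k = \sum_{j \neq i_k} Y_j^k \, Q_{[j,i_k]}$ depends only on the blocks other than $Y_{i_k}$, and is therefore unchanged during the $i_k$-th update. Starting from the decomposition $F(Y) = \sum_{i} \langle G_i, Y_i \rangle$, I would separate the $Y_{i_k}$-dependent contributions: the term $\langle G_{i_k}, Y_{i_k} \rangle$ (where $Y_{i_k}$ appears directly), together with the contribution of $Y_{i_k}$ to each other $G_i$ via the summand $Y_{i_k}\, Q_{[i_k,i]}$. Using cyclicity of the trace and the assumed symmetry of $Q$ (so that $Q_{[i_k,i]}^\top = Q_{[i,i_k]}$), a short calculation yields
\begin{equation*}
\sum_{i \neq i_k} \langle Y_{i_k}\, Q_{[i_k,i]},\, Y_i \rangle \;=\; \langle Y_{i_k},\, G_{i_k} \rangle,
\end{equation*}
so the total $Y_{i_k}$-dependent part of $F$ equals $2\,\langle G_{i_k}, Y_{i_k} \rangle$ plus a constant. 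This factor of $2$ is the only subtle bookkeeping step, and it is precisely where the symmetry of $Q$ is used. Consequently,
\begin{equation*}
F(Y^{k+1}) - F(Y^k) \;=\; 2 \left( \langle G_{i_k}^k, Y_{i_k}^{k+1} \rangle - \langle G_{i_k}^k, Y_{i_k}^k \rangle \right).
\end{equation*}

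Next I would evaluate the first inner product on the right using the closed-form update. Substituting $-G_{i_k}^k = U_{i_k} \Sigma_{i_k} V_{i_k}^\top$ and $Y_{i_k}^{k+1} = U_{i_k} I_{r \times d} V_{i_k}^\top$, and using $U_{i_k}^\top U_{i_k} = I_r$ and $V_{i_k}^\top V_{i_k} = I_d$, the inner product collapses to $-\tr(\Sigma_{i_k}^\top I_{r \times d}) = -\norm{G_{i_k}^k}_*$, since the $d$ diagonal entries of $\Sigma_{i_k}^\top I_{r \times d}$ are exactly the singular values of $G_{i_k}^k$. Substituting this value yields the claimed equality in \eqref{eq:BCM_descent}. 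Finally, to obtain non-positivity, I would invoke duality between the nuclear and spectral norms, $\langle A, B \rangle \geq -\norm{A}_*\,\norm{B}_{\mathrm{op}}$, together with $\norm{Y_{i_k}^k}_{\mathrm{op}} = 1$ for $Y_{i_k}^k \in \Stiefel(d,r)$. I do not foresee any real obstacle; the only care needed is tracking the factor of $2$ and verifying that the SVD-based Stiefel minimizer saturates the duality lower bound.
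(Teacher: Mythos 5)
Your proposal is correct and follows essentially the same route as the paper: both arguments use the symmetry of $Q$ to show that the $Y_{i_k}$-dependent part of $F$ is $2\langle G_{i_k}^k, Y_{i_k}\rangle$ (the paper reaches this by substituting the update rule for $G_i^{k+1}$ and regrouping, you by isolating the dependent terms directly), and both evaluate $\langle G_{i_k}^k, Y_{i_k}^{k+1}\rangle = -\norm{G_{i_k}^k}_*$ from the SVD. The only cosmetic difference is the last step: you invoke nuclear/operator norm duality where the paper cites the von Neumann trace theorem together with $\sigma_i(Y_{i_k}^k)=1$; these give the identical bound.
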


In addition to the cost decrease, another key quantity we must investigate is the Frobenius norm of the Riemannian gradient $\norm{\rgrad F(Y)}_F$.

\begin{lemma}
For $i \in [n]$, define $A_i \triangleq \frac{1}{2}(Y_i^\top G_i + G_i^\top Y_i)$. Then,
\begin{equation}
\norm{\rgrad F(Y)}^2_F = 4\sum_{i=1}^{n}(\norm{G_i}^2_F - \norm{A_i}^2_F).
\label{eq:rgrad_norm}
\end{equation}
\label{lem:rgrad_norm}
\end{lemma}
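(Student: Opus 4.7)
The plan is to compute the Riemannian gradient blockwise by first obtaining the Euclidean gradient and then applying the tangent-space projection formula for the Stiefel manifold, after which the identity will drop out of a short algebraic manipulation using the symmetry of $A_i$ and the constraint $Y_i^\top Y_i = I_d$.

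First I would identify the ambient Euclidean gradient of $F(Y) = \tr(Q Y^\top Y) = 2\sum_i \langle G_i, Y_i\rangle - \sum_i \langle Q_{[i,i]}, Y_i^\top Y_i\rangle$; since $Q$ is symmetric with zero diagonal blocks, the $i$-th block of $\nabla F(Y)$ is $2 G_i$. Next, using the standard projection onto the tangent space of $\Stiefel(d,r)$ at $Y_i$, namely
\begin{equation}
\proj_{Y_i}(Z) = Z - Y_i \tfrac{1}{2}(Y_i^\top Z + Z^\top Y_i),
\end{equation}
I would conclude that the $i$-th block of the Riemannian gradient is
\begin{equation}
(\rgrad F(Y))_i = 2 G_i - Y_i (Y_i^\top G_i + G_i^\top Y_i) = 2(G_i - Y_i A_i).
\end{equation}

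I would then expand
\begin{equation}
\norm{(\rgrad F(Y))_i}_F^2 = 4\left(\norm{G_i}_F^2 - 2\langle G_i, Y_i A_i\rangle + \norm{Y_i A_i}_F^2\right)
\end{equation}
and simplify the two non-trivial terms. For the cross term, the trick is that $A_i$ is symmetric, so $\langle G_i, Y_i A_i\rangle = \tr(A_i G_i^\top Y_i)$ equals $\tr(A_i \cdot \tfrac{1}{2}(G_i^\top Y_i + Y_i^\top G_i)) = \norm{A_i}_F^2$ because the trace of the product of a symmetric and a skew-symmetric matrix vanishes. For the last term, the Stiefel constraint $Y_i^\top Y_i = I_d$ gives $\norm{Y_i A_i}_F^2 = \tr(A_i^\top Y_i^\top Y_i A_i) = \norm{A_i}_F^2$. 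Combining these yields $\norm{(\rgrad F(Y))_i}_F^2 = 4(\norm{G_i}_F^2 - \norm{A_i}_F^2)$, and summing over $i \in [n]$ establishes \eqref{eq:rgrad_norm}.

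The only mild subtlety — and the step I would flag as the main obstacle compared to the $d=1$ case of \citet{ErdogduBCM2018} — is correctly handling that $Y_i^\top G_i$ need not be symmetric when $d > 1$, so one must symmetrize (introducing the factor of $\tfrac{1}{2}$ in the definition of $A_i$) and then exploit the symmetric/skew-symmetric trace orthogonality. Everything else is routine manipulation using the Stiefel projection and the constraint $Y_i^\top Y_i = I_d$.
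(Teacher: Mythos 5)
Your proof is correct and follows essentially the same route as the paper: compute the Euclidean gradient block $2G_i$, project onto the tangent space of $\Stiefel(d,r)$ at $Y_i$ to obtain the block $2(G_i - Y_i A_i)$, and expand the squared Frobenius norm using the symmetry of $A_i$ and the constraint $Y_i^\top Y_i = I_d$. The only blemish is the intermediate decomposition $F(Y) = 2\sum_i \langle G_i, Y_i\rangle - \sum_i \langle Q_{[i,i]}, Y_i^\top Y_i\rangle$, which double-counts the off-diagonal contributions (the correct identity is $F(Y) = \sum_i \langle G_i, Y_i\rangle$), but this slip does not propagate since you correctly use $\nabla F(Y) = 2YQ$, whose $i$-th block is indeed $2G_i$.
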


We are now ready to give the main theoretical results of this section. 
Theorem~\ref{thm:bcm_unif_sampling} and Corollary~\ref{cor:bcm_imp_sampling} below establish the global sublinear convergence rate of BCM with uniform sampling and importance sampling, respectively. In particular, after sufficient number of iterations, BCM with either sampling strategy will produce a solution with arbitrarily small gradient norm in expectation. In other words, Algorithm~\ref{alg:BCM} is guaranteed to converge to a first-order critical point of Problem~\ref{prob:riemannian} in expectation. We note that these results generalize the global convergence proof and rate estimates given in Theorem~3.2 and Corollary~3.3 in \cite{ErdogduBCM2018}.

\begin{theorem}
Let $F^\star$ be the optimal value of Problem~\ref{prob:riemannian}.
Define $C_1(\Q) \triangleq \max_i \\ \sum_{j \neq i} \|\Q_{[j,i]}\|_*$.
Then, for any $K \geq \lceil 2dnC_1(\Q)(F(Y^0) - F^\star) / \epsilon \rceil$ iterations, BCM with uniform sampling is guaranteed to return a solution $Y^k$, for some $k \in [K-1]$, such that $\mathbb{E} \norm{\rgrad F(Y^k)}^2_F \leq \epsilon$.
\label{thm:bcm_unif_sampling}
\end{theorem}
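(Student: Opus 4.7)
The plan is to combine the per-step descent bound of Lemma~\ref{lem:BCM_descent}, evaluated in expectation under uniform sampling, with a Polyak--{\L}ojasiewicz-style inequality that relates the descent summands $\|G_i\|_* + \langle G_i, Y_i\rangle$ to the Riemannian gradient norm as given by Lemma~\ref{lem:rgrad_norm}. Telescoping the resulting inequality over $K$ iterations and taking total expectation will yield the claim via a standard ``min $\leq$ average'' argument. Concretely, conditioning on $Y^k$ and averaging over the uniformly chosen block $i_k$ in Lemma~\ref{lem:BCM_descent} gives
\[
\mathbb{E}\bigl[F(Y^k) - F(Y^{k+1}) \mid Y^k\bigr] \;=\; \frac{2}{n}\sum_{i=1}^n \bigl(\|G_i^k\|_* + \langle G_i^k, Y_i^k\rangle\bigr),
\]
with each summand nonnegative because $\langle G_i, Y_i\rangle \geq -\|G_i\|_*$ by nuclear/operator-norm duality together with $\|Y_i\|_\text{op} = 1$ on the Stiefel manifold.

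The key technical step is a per-block inequality of the form $\|G_i\|_F^2 - \|A_i\|_F^2 \leq \alpha\, \|G_i\|_*\bigl(\|G_i\|_* + \langle G_i, Y_i\rangle\bigr)$ for a dimensional constant $\alpha$. The natural route is to diagonalize through the SVD $G_i = U_i\Sigma_i V_i^\top$ and parameterize the coupling by $M \triangleq U_i^\top Y_i V_i \in \mathbb{R}^{d\times d}$, which satisfies $\|M\|_\text{op}\leq 1$. In this basis $\langle G_i, Y_i\rangle = \sum_k \sigma_k M_{kk}$; a direct computation shows $V_i^\top A_i V_i = \tfrac{1}{2}(\Sigma_i M + M^\top \Sigma_i)$, so keeping only diagonal entries yields $\|A_i\|_F^2 \geq \sum_k \sigma_k^2 M_{kk}^2$. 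Hence $\|G_i\|_F^2 - \|A_i\|_F^2 \leq \sum_k \sigma_k^2 (1 - M_{kk})(1 + M_{kk})$, and using $|M_{kk}|\leq 1$ together with $\max_k \sigma_k \leq \|G_i\|_*$ delivers the inequality. For $d=1$ this reduces to the scalar factorization of $\|G\|_2^2 - (G^\top Y)^2$ used by~\cite{ErdogduBCM2018}.

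Combining this with the triangle-inequality bound $\|G_i\|_* \leq \sum_{j\neq i}\|Y_j\|_\text{op}\,\|Q_{[j,i]}\|_* \leq C_1(Q)$, summing over $i$, and invoking Lemma~\ref{lem:rgrad_norm} produces a P{\L}-type estimate
\[
\mathbb{E}\bigl[F(Y^k) - F(Y^{k+1}) \mid Y^k\bigr] \;\geq\; \frac{1}{2dnC_1(Q)}\,\|\rgrad F(Y^k)\|_F^2.
\]
Taking total expectation, telescoping over $k = 0,\ldots,K-1$, and using $F(Y^K)\geq F^\star$ gives $\sum_{k=0}^{K-1}\mathbb{E}\|\rgrad F(Y^k)\|_F^2 \leq 2dnC_1(Q)\bigl(F(Y^0) - F^\star\bigr)$, so the minimum over $k\in[K-1]$ is at most $2dnC_1(Q)(F(Y^0) - F^\star)/K$, which is $\leq \epsilon$ for $K$ of the stated order.

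The main obstacle is the per-block inequality in the second step: the clean scalar factorization $\|G\|_2^2 - (G^\top Y)^2 = (\|G\|_2 + G^\top Y)(\|G\|_2 - G^\top Y)$ does not extend exactly to matrices, and the SVD-based reduction must carefully bound the symmetric-part Frobenius norm $\|A_i\|_F^2$ from below in terms of only the diagonal contribution. This discarding of the off-diagonal entries of $\mathrm{Sym}(\Sigma_i M)$ is precisely what introduces the factor of $d$ in the final rate estimate, compared with the Cartesian product of spheres analyzed in~\cite{ErdogduBCM2018}.
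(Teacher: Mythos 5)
Your overall skeleton---the expected descent from Lemma~\ref{lem:BCM_descent}, a per-block inequality converting $\norm{G_i}_* + \langle G_i, Y_i\rangle$ into $\norm{G_i}_F^2 - \norm{A_i}_F^2$, Lemma~\ref{lem:rgrad_norm}, and telescoping---matches the paper's proof. Where you genuinely diverge is in the per-block inequality. The paper writes $\langle G_i,Y_i\rangle = \tr(A_i)$ and invokes Lemma~\ref{lem:matrix_norm_inequality}, $\tr(A)^2 - \norm{A}_F^2 \le \norm{G}_*^2 - \norm{G}_F^2$, whose proof runs through von Neumann, the interlacing bound of Lemma~\ref{lem:singular_value_inequality}, and the compound-matrix majorization of Lemma~\ref{lem:lambda_sigma_inequalities}. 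Your route---conjugating into the singular basis of $G_i$, setting $M = U_i^\top Y_i V_i$ with $\norm{M}_{\mathrm{op}} \le 1$, and discarding the off-diagonal of $\tfrac{1}{2}(\Sigma_i M + M^\top \Sigma_i)$ to get $\norm{A_i}_F^2 \ge \sum_k \sigma_k^2 M_{kk}^2$---is correct and considerably more elementary: it bypasses all three auxiliary lemmas. You also obtain the sharper bound $\norm{G_i}_* \le C_1(\Q)$ via $\norm{Y_j}_{\mathrm{op}}=1$, whereas the paper uses $\norm{Y_j}_*=d$ and only gets $\norm{G_i}_* \le d\,C_1(\Q)$; this is where the paper's factor of $d$ actually comes from.

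The one thing to fix is your constant bookkeeping. Chasing your own chain: the per-block inequality holds with $\alpha = 2$ (from $1 - M_{kk} \le 2$), so $\norm{G_i}_*+\langle G_i,Y_i\rangle \ge \frac{1}{2C_1(\Q)}\left(\norm{G_i}_F^2-\norm{A_i}_F^2\right)$ and hence
\begin{equation*}
\mathbb{E}\bigl[F(Y^k)-F(Y^{k+1})\mid Y^k\bigr] \;\ge\; \frac{2}{n}\cdot\frac{1}{2C_1(\Q)}\cdot\frac{1}{4}\norm{\rgrad F(Y^k)}_F^2 \;=\; \frac{1}{4nC_1(\Q)}\norm{\rgrad F(Y^k)}_F^2,
\end{equation*}
not $\frac{1}{2dnC_1(\Q)}\norm{\rgrad F(Y^k)}_F^2$: no factor of $d$ ever enters your argument, so attributing one to the off-diagonal discarding is incorrect. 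For $d\ge 2$ this is harmless, since $\frac{1}{4nC_1(\Q)}\ge\frac{1}{2dnC_1(\Q)}$ and the stated $K$ suffices; for $d=1$ your derivation only certifies the conclusion for $K \ge 4nC_1(\Q)(F(Y^0)-F^\star)/\epsilon$, a factor of $2$ above the theorem's threshold, so strictly speaking the stated constant is not recovered in that case. (For what it is worth, the paper's own first inequality in this proof, $\norm{G}_*^2 + \norm{G}_*\langle G,Y\rangle \ge \norm{G}_*^2-\langle G,Y\rangle^2$, fails whenever $-\norm{G}_* < \langle G,Y\rangle < 0$, and the standard repair costs exactly the factor of $2$ you pay; your constant is arguably the honest one.)
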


\begin{corollary}
Let $F^\star$ be the optimal value of Problem~\ref{prob:riemannian}.
Define $C_2(\Q) \triangleq \sum_{i\neq j} \norm{\Q_{[i,j]}}_*$.
Then, for any $K \geq \lceil 2dC_2(\Q)(F(Y^0) - F^\star) / \epsilon \rceil$ iterations, BCM with importance sampling is guaranteed to return a solution $Y^k$, for some $k \in [K-1]$, such that $\mathbb{E} \norm{\rgrad F(Y^k)}^2_F \leq \epsilon$.
\label{cor:bcm_imp_sampling}
\end{corollary}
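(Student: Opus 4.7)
The plan is to mirror the proof of Theorem~\ref{thm:bcm_unif_sampling}, simply swapping the uniform distribution for the importance-sampling probabilities $p_i = \|G_i^k\|_*/\sum_j \|G_j^k\|_*$. First I would take the conditional expectation of the descent in Lemma~\ref{lem:BCM_descent} over the index $i_k$:
\begin{equation*}
\mathbb{E}\bigl[F(Y^k) - F(Y^{k+1}) \mid Y^k\bigr] \;=\; 2\sum_{i=1}^{n} p_i \bigl(\|G_i^k\|_* + \langle G_i^k, Y_i^k\rangle\bigr) \;=\; \frac{2\sum_i \|G_i^k\|_*\bigl(\|G_i^k\|_* + \langle G_i^k, Y_i^k\rangle\bigr)}{\sum_j \|G_j^k\|_*}.
\end{equation*}
The crucial feature of importance sampling is that it introduces an extra factor of $\|G_i^k\|_*$ in the numerator, exactly tailored to cancel the $\|G_i^k\|_*$ that appears in the denominator of the per-block ``gap-to-gradient'' bound driving the proof of Theorem~\ref{thm:bcm_unif_sampling}.

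Next I would invoke that same per-block inequality---namely an SVD-based estimate of the shape $\|G_i\|_*\bigl(\|G_i\|_* + \langle G_i, Y_i\rangle\bigr) \geq (\|G_i\|_F^2 - \|A_i\|_F^2)/(2d)$ (or equivalently $\|G_i\|_* + \langle G_i, Y_i\rangle \geq (\|G_i\|_F^2 - \|A_i\|_F^2)/(2d\|G_i\|_*)$). Summing over $i$ and applying Lemma~\ref{lem:rgrad_norm} yields
\begin{equation*}
\sum_i \|G_i^k\|_*\bigl(\|G_i^k\|_* + \langle G_i^k, Y_i^k\rangle\bigr) \;\geq\; \frac{1}{2d}\sum_i \bigl(\|G_i^k\|_F^2 - \|A_i^k\|_F^2\bigr) \;=\; \frac{\|\rgrad F(Y^k)\|_F^2}{8d}.
\end{equation*}
For the denominator, the triangle inequality together with the Stiefel isometry property (since $Y_i^\top Y_i = I_d$ implies $(Y_i \Q_{[i,j]})^\top(Y_i \Q_{[i,j]}) = \Q_{[i,j]}^\top \Q_{[i,j]}$, so $\|Y_i \Q_{[i,j]}\|_* = \|\Q_{[i,j]}\|_*$) produces $\sum_j \|G_j^k\|_* \leq \sum_{i\neq j}\|\Q_{[i,j]}\|_* = C_2(\Q)$. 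This is the step that replaces the $\max_i\sum_{j\neq i}\|\Q_{[j,i]}\|_*$ bound (times $n$) from Theorem~\ref{thm:bcm_unif_sampling} with the tighter total-sum quantity $C_2(\Q)$.

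Combining these pieces and taking total expectation via the tower rule gives $\mathbb{E}\bigl[F(Y^k) - F(Y^{k+1})\bigr] \geq \mathbb{E}\,\|\rgrad F(Y^k)\|_F^2 /(2dC_2(\Q))$ (up to absolute constants), and then telescoping over $k = 0,\dots,K-1$ and bounding the sum below by $K$ times the minimum produces
\begin{equation*}
F(Y^0) - F^\star \;\geq\; \frac{K}{2dC_2(\Q)}\min_{k\in[K-1]}\mathbb{E}\,\|\rgrad F(Y^k)\|_F^2,
\end{equation*}
from which the claimed iteration count follows by solving for $K$ with right-hand-side~$\epsilon$. The only substantive ingredient is the per-block SVD-based inequality relating the single-coordinate gap to the block Riemannian gradient norm, and that work has already been done in Theorem~\ref{thm:bcm_unif_sampling}; after that, the corollary is essentially a mechanical change-of-sampling argument exploiting the fortuitous cancellation of $\|G_i^k\|_*$ built into the importance-sampling weights.
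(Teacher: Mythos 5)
Your overall route is exactly the intended one: the paper gives no separate proof of this corollary, and the expected argument is the direct adaptation of the proof of Theorem~\ref{thm:bcm_unif_sampling}, in which the importance weights $p_i = \norm{G_i^k}_*/\sum_j\norm{G_j^k}_*$ cancel the $\norm{G_i^k}_*$ in the denominator of the per-block descent bound, leaving $\sum_j\norm{G_j^k}_*$ to be bounded by a constant. However, your bookkeeping does not reproduce the stated constant, for two offsetting reasons. First, the per-block inequality you quote, $\norm{G_i}_*\bigl(\norm{G_i}_*+\langle G_i,Y_i\rangle\bigr) \geq \bigl(\norm{G_i}_F^2-\norm{A_i}_F^2\bigr)/(2d)$, carries a spurious factor $1/(2d)$: what the proof of Theorem~\ref{thm:bcm_unif_sampling} actually establishes (via the von Neumann step and Lemma~\ref{lem:matrix_norm_inequality}) is $\norm{G_i}_*\bigl(\norm{G_i}_*+\langle G_i,Y_i\rangle\bigr) \geq \norm{G_i}_F^2-\norm{A_i}_F^2$, with no dimension factor at all. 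Second, your bound $\sum_j\norm{G_j^k}_* \leq C_2(\Q)$ via the isometry $\norm{Y_i\Q_{[i,j]}}_* = \norm{\Q_{[i,j]}}_*$ is correct but tighter than what the paper's constant reflects; the $d$ in $2dC_2(\Q)$ comes from the looser submultiplicative estimate $\norm{Y_i\Q_{[i,j]}}_* \leq \norm{Y_i}_*\norm{\Q_{[i,j]}}_* = d\,\norm{\Q_{[i,j]}}_*$ used in the theorem's proof, which yields $\sum_j\norm{G_j^k}_* \leq d\,C_2(\Q)$. As written, your two deviations combine to give $\mathbb{E}_k\bigl[F(Y^k)-F(Y^{k+1})\bigr] \geq \norm{\rgrad F(Y^k)}_F^2/\bigl(4dC_2(\Q)\bigr)$ and hence an iteration count of $4dC_2(\Q)(F(Y^0)-F^\star)/\epsilon$, a factor of $2$ weaker than claimed; hedging with ``up to absolute constants'' is not adequate when the corollary asserts a specific constant. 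The fix is immediate: drop the spurious $1/(2d)$ and use the paper's denominator bound to recover $2dC_2(\Q)$ exactly --- or keep your tighter isometry bound and obtain the even stronger count $2C_2(\Q)(F(Y^0)-F^\star)/\epsilon$, which of course also implies the stated result.
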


\begin{remark}
We note that the BCM algorithm and the analysis presented in this section readily extend to Cartesian product of Stiefel manifolds with different number of orthonormal frames, 
i.e., $\Stiefel(d_1,r) \times \Stiefel(d_2,r) \times \hdots \times \Stiefel(d_n,r)$, where $d_i \leq r, \forall i \in [n]$. A similar global sublinear convergence rate can be proved, with small changes in the constants (for example, $d$ will be replaced with $\max_{i \in [n]} d_i$).  
\end{remark}

\clearpage
{
\footnotesize
\bibliographystyle{plainnat}
\bibliography{scanexchange,slam,optimization}
}

\clearpage
\begin{appendices}
	\renewcommand{\thesection}{\appendixname~\Alph{section}}

	\section{Proofs}
	\label{sec:proofs}
	\begin{proof}[Lemma~\ref{lem:BCM_descent}]
	Starting from the definition of $F(Y^{k+1})$,
	\begin{equation}
	\begin{aligned}
		F(Y^{k+1}) &= \sum_{i=1}^{n} \langle Y_i^{k+1}, G_i^{k+1} \rangle \\
		&= \langle Y_{i_k}^{k+1}, G_{i_k}^{k+1} \rangle + \sum_{i \neq i_k} \langle Y_i^{k+1}, G_i^{k+1} \rangle \\
		&= \langle Y_{i_k}^{k+1}, G_{i_k}^{k} \rangle + \sum_{i \neq i_k} \langle Y_i^{k}, G_i^{k+1} \rangle \\
		&= -\norm{G_{i_k}^k}_* + \sum_{i \neq i_k} \langle Y_i^{k}, G_i^{k} - Y^k_{i_k}\Q_{[i_k,i]} + Y^{k+1}_{i_k}\Q_{[i_k,i]} \rangle \\
		&= -\norm{G_{i_k}^k}_* + \sum_{i \neq i_k} \langle Y_i^{k}, G_i^{k} \rangle + \sum_{i \neq i_k} \langle Y^k_{i}, (Y^{k+1}_{i_k} - Y^k_{i_k}) \Q_{[i_k,i]} \rangle \\
		&= -\norm{G_{i_k}^k}_* + \sum_{i \neq i_k} \langle Y_i^{k}, G_i^{k} \rangle + \sum_{i \neq i_k} \langle Y^k_{i}\Q_{[i,i_k]}, Y^{k+1}_{i_k} - Y^k_{i_k}  \rangle \\
		&= -\norm{G_{i_k}^k}_* + \sum_{i \neq i_k} \langle Y_i^{k}, G_i^{k} \rangle + \langle G_{i_k}^k, Y^{k+1}_{i_k} - Y^k_{i_k} \rangle \\
		&= -2\norm{G_{i_k}^k}_* - 2 \langle G_{i_k}^k,Y^k_{i_k} \rangle + \sum_{i} \langle Y_i^{k}, G_i^{k} \rangle \\
		&= -2\norm{G_{i_k}^k}_* - 2 \langle G_{i_k}^k,Y^k_{i_k} \rangle + F(Y^k).
	\end{aligned}
	\end{equation}
	Moreover, using the general von Neumann trace theorem,
	we can upper bound the absolute value of the second term in the above expression:
	\begin{equation}
	\begin{aligned}
		|\langle G_{i_k}^k,Y^k_{i_k} \rangle| &\leq \sum_{i=1}^{d} \sigma_i(G_{i_k}^k) \sigma_i(Y_{i_k}^k) \\
		&= \sum_{i=1}^{d} \sigma_i(G_{i_k}^k)
		= \norm{G_{i_k}^k}_*
	\end{aligned}
	\end{equation}
	This ensures that each iteration of the BCM algorithm yields a descent on the objective value:
	\begin{equation*}
	F(Y^{k+1}) - F(Y^k) = -2 (\norm{G_{i_k}^k}_* + \langle G_{i_k}^k,Y^k_{i_k} \rangle) \leq 0.
	\end{equation*}
	\end{proof}

	\clearpage

	\begin{proof}[Lemma~\ref{lem:rgrad_norm}]
	Starting from the definition of the Riemannian gradient,
	\begin{equation}
	\begin{aligned}
	\rgrad F(Y) &= \proj_Y (\nabla F(Y)) \\
	&= 2 \proj_Y(Y\Q) \\
	&= 2 \bigg \{ Y\Q -  Y \frac{1}{2} \BDiag_d(Y^\top Y Q + \Q Y^\top Y) \bigg \}
	\end{aligned}
	\label{eq:rgrad_def}
	\end{equation}
	Expand the terms in $\frac{1}{2}\BDiag_d(Y^\top Y Q + \Q Y^\top Y)$. The $i$th diagonal block is given by,
	\begin{equation}
	\begin{aligned}
	\frac{1}{2}\BDiag_d(Y^\top Y Q + \Q Y^\top Y)_{[i,i]} & = \frac{1}{2}\sum_{j=1}^{n} (Y_i^\top Y_j \Q_{[j,i]} + Q_{[i,j]}Y_j^\top Y_i) \\
	&= \frac{1}{2} Y_i^\top(\sum_{j \neq i} Y_j \Q_{[j,i]}) + \frac{1}{2} (\sum_{j\neq i} Q_{[i,j]}Y_j^\top)^\top Y_i \\
	&= \frac{1}{2} Y_i^\top G_i + \frac{1}{2} G_i^\top Y_i \\
	&= A_i
	\end{aligned}
	\label{eq:expanded}
	\end{equation}
	Above, we have used the assumption that $\Q_{[i,i]} = {0}_d$.
	Using \eqref{eq:expanded} we can now simplify (\ref{eq:rgrad_def}),
	\begin{equation}
	\begin{aligned}
	\rgrad F(Y) &= 2 Y
	\begin{bmatrix}
	-A_1       & \Q_{[1,2]}  &  \hdots & \Q_{[1,n]}   \\
	\Q_{[2,1]} & -A_2      &  \hdots & \Q_{[2,n]} \\
	\vdots     &           &  \ddots & \vdots     \\
	\Q_{[n,1]} & \Q_{[n,2]}&  \hdots & -A_n
	\end{bmatrix}.
	\end{aligned}
	\end{equation}
	View $\rgrad F(Y) \in \mathbb{R}^{r \times dn}$ as a row block matrix with blocks of size $r \times d$. Then, the $[1,i]$-th block can be expressed as,
	\begin{equation}
	\rgrad F(Y)_{[1,i]} = 2(-Y_iA_i + G_i)
	\end{equation}
	The squared Frobenius norm is given by,
	\begin{equation}
	\begin{aligned}
	\norm{\rgrad F(Y)_{[1,i]}}^2_F &= 4 \tr((-Y_iA_i + G_i)^\top (-Y_iA_i + G_i)) \\
	&= 4 \tr (A_i^\top Y_i^\top Y_i A_i  + G_i^\top G_i - G_i^\top Y_i A_i - A_i^\top Y_i^\top G_i ) \\
	&= 4 (\norm{A_i}^2_F + \norm{G_i}^2_F - \tr((G_i^\top Y_i + Y_i^\top G_i)A_i)) \\
	&= 4 (\norm{A_i}^2_F + \norm{G_i}^2_F - 2\norm{A_i}^2_F) \\
	&= 4 (\norm{G_i}^2_F - \norm{A_i}^2_F).
	\end{aligned}
	\end{equation}
	Finally, the squared Frobenius norm of the entire gradient is simply the sum over the individual blocks,
	\begin{equation}
	\norm{\rgrad F(Y)}^2_F = 4\sum_{i=1}^{n}(\norm{G_i}^2_F - \norm{A_i}^2_F).
	\end{equation}
	\end{proof}

	\clearpage

	\begin{proof}[Theorem~\ref{thm:bcm_unif_sampling}]
	We first note that, since $F$ is a continuous function and $\Stiefel(d,r)^n$ is a compact manifold, $F^\star$ must be bounded from below.
	From Lemma~\ref{lem:BCM_descent},
	\begin{equation}
	\begin{aligned}
	F(Y^k) - F(Y^{k+1}) &= 2 (\norm{G_{i_k}^k}_* + \langle G_{i_k}^k, Y_{i_k}^k \rangle) \\
	&= 2 \frac{\norm{G_{i_k}^k}^2_* + \norm{G_{i_k}^k}_*\langle G_{i_k}^k, Y_{i_k}^k \rangle}{\norm{G_{i_k}^k}_*} \\
	&\geq 2 \frac{\norm{G_{i_k}^k}^2_* - \langle G_{i_k}^k, Y_{i_k}^k \rangle^2}{\norm{G_{i_k}^k}_*} \\
	&= 2 \frac{\norm{G_{i_k}^k}^2_* - \tr(A_{i_k}^k)^2}{\norm{G_{i_k}^k}_*} \\
	&\geq 2 \frac{\norm{G_{i_k}^k}^2_F - \norm{A_{i_k}^k}^2_F}{\norm{G_{i_k}^k}_*}
	\end{aligned}
	\end{equation}
	where the first inequality follows from the general von Neumann trace theorem,
	and the second inequality follows from Lemma~\ref{lem:matrix_norm_inequality}.
	Given $Y^k$, take the expectation over the next iteration of BCM,
	\begin{equation}
	\begin{aligned}
	F(Y^k) - \mathbb{E}_k F(Y^{k+1}) &\geq 2\sum_{i=1}^{n} p_i \frac{\norm{G_{i}^k}^2_F - \norm{A_{i}^k}^2_F}{\norm{G_{i}^k}_*} \\
	&\geq \frac{2}{n \max_i\norm{G_{i}^k}_*} \sum_{i=1}^{n} (\norm{G_{i}^k}^2_F - \norm{A_{i}^k}^2_F) \\
	&= \frac{1}{2n \max_i\norm{G_{i}^k}_*} \norm{\rgrad F(Y^k)}^2_F \\
	&\geq \frac{1}{2dn C_1(\Q)} \norm{\rgrad F(Y^k)}^2_F
	\end{aligned}
	\end{equation}
	where the second equality follows from Lemma~\ref{lem:rgrad_norm}.
	The last inequality holds because each $\norm{G_i^k}_*$ can be upper bounded as,
	\begin{equation}
	\begin{aligned}
	\norm{G_i^k}_*  &= \Big\|\sum_{j \neq i} Y^k_j Q_{[j,i]}\Big\|_* \\
	&\leq \sum_{j \neq i} \norm{Y^k_j Q_{[j,i]}}_* \\
	&\leq \sum_{j \neq i} \norm{Y^k_j}_* \norm{Q_{[j,i]}}_* \\
	&= d \sum_{j \neq i} \norm{Q_{[j,i]}}_* \\
	&\leq d C_1(\Q)
	\end{aligned}
	\end{equation}
	To prove the theorem, suppose that
	$\mathbb{E} \norm{\rgrad F(Y^k)}^2_F > \epsilon, \forall k \in [K-1]$ for some integer $K$.
	Then we have,
	\begin{equation}
	\begin{aligned}
	F(Y^0) - F^\star &\geq F(Y^0) - \mathbb{E}F(Y^K) \\
	&= \sum_{k=0}^{K-1} \mathbb{E}[F(Y^k) - F(Y^{k+1})] \\
	&= \sum_{k=0}^{K-1} \mathbb{E}[F(Y^k) - \mathbb{E}_k F(Y^{k+1})] \\
	&\geq \frac{1}{2dn C_1(\Q)} \sum_{k=0}^{K-1} \mathbb{E} \norm{\rgrad F(Y^k)}^2_F \\
	&> \frac{K\epsilon}{2dn C_1(\Q)}
	\end{aligned}
	\end{equation}
	By contradiction, since $F^\star$ is bounded from below, the algorithm returns a solution with $\norm{\rgrad F(Y^k)}^2_F \leq \epsilon$, for some $k \in [K-1]$, provided that
	\begin{equation}
	K > \frac{2dnC_1(\Q)(F(Y^0) - F^\star)}{\epsilon}
	\end{equation}
	\end{proof}

	\clearpage
	\section{Miscellaneous Lemmas}
	\label{sec:misc_lemmas}

	\begin{lemma} Let $M \in \mathbb{R}^{n \times n}$ have ordered singular values $\sigma_1 \geq \hdots \geq \sigma_n \geq 0$ and eigenvalues $\lambda_1, \hdots , \lambda_n$.
	\begin{enumerate}
	\item[(a)] For any $p > 0$,
	\begin{equation}
	\begin{aligned}
	\sum_{i=1}^n |\lambda_i|^p \leq \sum_{i=1}^n \sigma_i^p
	\end{aligned}
	\label{eq:lambda_sigma_inequality}
	\end{equation}
	\item[(b)]
	\begin{equation}
	\begin{aligned}
	\sum_{1 \leq i < j \leq n} |\lambda_i\lambda_j| \leq \sum_{1 \leq i < j \leq n} \sigma_i\sigma_j
	\end{aligned}
	\label{eq:elementary_polynomial_inequality}
	\end{equation}
	\end{enumerate}
	\label{lem:lambda_sigma_inequalities}
	\end{lemma}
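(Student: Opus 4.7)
The plan is to derive both inequalities from Weyl's multiplicative majorization theorem, which asserts that for any $M \in \mathbb{R}^{n \times n}$ with eigenvalues ordered so that $|\lambda_1| \geq \cdots \geq |\lambda_n|$ and singular values $\sigma_1 \geq \cdots \geq \sigma_n$,
\[\prod_{i=1}^{k} |\lambda_i| \leq \prod_{i=1}^{k} \sigma_i, \qquad k = 1, \ldots, n.\]
Taking logarithms, this is exactly the statement that $(\log |\lambda_i|)$ is weakly majorized by $(\log \sigma_i)$ in the usual additive sense. The key structural fact I will leverage is standard from the Marshall--Olkin majorization theory: any symmetric function $\varphi : \mathbb{R}^n \to \mathbb{R}$ which is \emph{nondecreasing in each coordinate} and \emph{Schur-convex} satisfies $\varphi(\log |\lambda|) \leq \varphi(\log \sigma)$. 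So the work reduces to exhibiting the right $\varphi$ for each part.

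For part (a), I would take $\varphi(t_1,\ldots,t_n) = \sum_{i=1}^n e^{p t_i}$. For any $p > 0$, each univariate term $t \mapsto e^{p t}$ is convex and increasing, so $\varphi$ is a symmetric sum of convex increasing functions---hence symmetric, coordinatewise nondecreasing, and Schur-convex. Substituting $t_i = \log |\lambda_i|$ on one side and $t_i = \log \sigma_i$ on the other immediately yields $\sum_i |\lambda_i|^p \leq \sum_i \sigma_i^p$.

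For part (b), I would instead use $\varphi(t_1,\ldots,t_n) = \sum_{1 \leq i < j \leq n} e^{t_i + t_j}$, i.e., the second elementary symmetric polynomial precomposed with coordinatewise exponentiation. Each term $e^{t_i + t_j}$ is the exponential of a linear form, hence convex, and is increasing in both $t_i$ and $t_j$; a nonnegative sum of such terms is therefore convex, symmetric, and coordinatewise nondecreasing, and in particular Schur-convex. Evaluating $\varphi$ at $\log|\lambda|$ and $\log\sigma$ gives exactly $\sum_{i<j} |\lambda_i\lambda_j| \leq \sum_{i<j} \sigma_i\sigma_j$.

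The only technical wrinkle I anticipate is the handling of zero eigenvalues or singular values, where the relevant logarithms become $-\infty$. The clean workaround is a density/continuity argument: both sides of each inequality are continuous functions of the entries of $M$, so I would first prove the inequalities on the dense open set of matrices whose eigenvalue and singular value spectra are strictly positive (e.g.\ via a small perturbation $M \mapsto M + \epsilon \Delta$ with generic $\Delta$) and then take the limit $\epsilon \to 0^+$. Apart from this limiting step, the argument is a direct application of the Weyl--Horn majorization machinery.
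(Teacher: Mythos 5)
Your argument is correct, but it takes a genuinely different route from the paper's, most visibly in part (b). For part (a) the paper simply cites Weyl's majorant theorem (Horn and Johnson, Theorem 3.3.13(b)); what you write out --- Weyl's product inequalities $\prod_{i\le k}|\lambda_i| \le \prod_{i\le k}\sigma_i$, passage to weak log-majorization, and the test function $\sum_i e^{pt_i}$, which is increasing and Schur-convex because $t\mapsto e^{pt}$ is convex for every $p>0$ --- is essentially the standard proof of that cited theorem, so you gain self-containedness at the cost of importing the Marshall--Olkin machinery. For part (b) the divergence is real: the paper forms the second compound (antisymmetric tensor power) $\wedge^2 M$, whose eigenvalues are the products $\lambda_i\lambda_j$ and whose singular values are the products $\sigma_i\sigma_j$, and then just applies part (a) with $p=1$; you instead feed the weak majorization of $(\log|\lambda_i|)$ by $(\log\sigma_i)$ directly into the increasing Schur-convex function $\sum_{i<j}e^{t_i+t_j}$. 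Both are valid: the compound-matrix trick is shorter once the spectral facts about $\wedge^2 M$ are granted, while your argument avoids multilinear algebra entirely and generalizes for free to every elementary symmetric polynomial, giving $e_k(|\lambda|)\le e_k(\sigma)$ for all $k$ (indeed to any symmetric, coordinatewise-increasing, convex function of the logarithms). Your handling of zero eigenvalues or singular values by perturbing to an invertible matrix and passing to the limit is a necessary step in your formulation and is handled correctly, since both sides of each inequality are continuous in $M$; the paper sidesteps this because the cited theorem already accommodates zeros.
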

	\begin{proof}
	\
	\begin{enumerate}
	\item[(a)] This is a special case of \cite[Theorem 3.3.13(b)]{HornMatrixAnalysis} with $k = n$.
	\item[(b)] From \cite[Theorem 2.16]{ZhanMatrixTheory}, the compound matrix
	  $\wedge^2 M$ has eigenvalues $\lambda_i\lambda_j$ and singular values
	  $\sigma_i\sigma_j$. Applying (\ref{eq:lambda_sigma_inequality}) on
	  $\wedge^2 M$ with $p = 1$ gives the desired inequality.\footnote{The authors
		thank Darij Grinberg \cite{mathoverflow} for his help with the proof.}
	\end{enumerate}
	\end{proof}

	\begin{lemma}
	Let $G \in \mathbb{R}^{r \times d}, Y \in \Stiefel(d,r)$.
	Let $\sigma_i(\cdot)$, $\lambda_i(\cdot)$ return the $i$th algebraically largest singular value and eigenvalue, respectively. For $i \in [d]$,
	\begin{equation}
	\sigma_i(Y^\top G) \leq \sigma_i(G)
	\label{eq:singular_value_inequality}
	\end{equation}
	\label{lem:singular_value_inequality}
	\end{lemma}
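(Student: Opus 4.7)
The key observation is that $Y \in \Stiefel(d,r)$ satisfies $Y\transpose Y = I_d$, so the linear map $Y\transpose : \mathbb{R}^r \to \mathbb{R}^d$ is a partial isometry whose spectral norm equals one, i.e.\ $\|Y\transpose v\| \leq \|v\|$ for every $v \in \mathbb{R}^r$. The plan is to parlay this contractivity into the desired singular-value bound via one of the following equivalent routes.

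The cleanest route is to invoke the well-known inequality $\sigma_i(AB) \leq \|A\|_2 \, \sigma_i(B)$ (a consequence of the Weyl-type inequalities for singular values; see e.g.\ Horn and Johnson), applied with $A = Y\transpose$ and $B = G$. Since $\|Y\transpose\|_2 = 1$, this immediately yields $\sigma_i(Y\transpose G) \leq \sigma_i(G)$ for all $i \in [d]$.

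For self-containedness, one can re-derive the same bound from the min--max characterization
\[
\sigma_i^2(M) \;=\; \min_{\substack{S \subseteq \mathbb{R}^d \\ \dim S = d - i + 1}} \; \max_{\substack{x \in S \\ \|x\| = 1}} \|M x\|^2.
\]
Substituting $M = Y\transpose G$ and using $\|Y\transpose G x\| \leq \|G x\|$ pointwise makes the inner maximum for $Y\transpose G$ dominated by the corresponding one for $G$, so $\sigma_i^2(Y\transpose G) \leq \sigma_i^2(G)$. A third, perhaps most transparent, alternative is to complete $Y$ to a full orthogonal matrix $\tilde Y = [\,Y \mid Y_\perp\,] \in \Orthogonal(r)$; then $\tilde Y\transpose G$ shares its singular values with $G$, while $Y\transpose G$ is the submatrix formed by its first $d$ rows. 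The standard submatrix interlacing theorem for singular values then closes the argument.

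No real obstacle is expected: the statement is a direct consequence of a classical singular-value inequality, and the only care needed is to cite (or briefly justify) whichever of the three ingredients above is invoked, since that inequality is the crux of the lemma.
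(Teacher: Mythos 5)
Your proposal is correct, and your second route---the min--max characterization of singular values combined with the pointwise contraction $\|Y^\top G x\| \leq \|Gx\|$---is essentially the paper's own argument, which phrases the same idea as a Courant--Fischer comparison of the eigenvalues of $G^\top Y Y^\top G$ and $G^\top G$ using $\|Y^\top\| = 1$. Your other two routes (the standard bound $\sigma_i(AB) \leq \|A\|_2\,\sigma_i(B)$, and completing $Y$ to an orthogonal matrix followed by row-deletion interlacing) are equally valid shortcuts to the same fact.
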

	\begin{proof}
	To show (\ref{eq:singular_value_inequality}),
	we can equivalently show that,
	\begin{equation}
	\lambda_i(G^\top Y Y^\top G) \leq \lambda_i(G^\top G)
	\label{eq:eigenvalue_inequality}
	\end{equation}
	We make use of the min-max characterization of eigenvalues,
	\begin{equation}
	\begin{aligned}
	\lambda_i(G^\top Y Y^\top G) &= \max_{\dim(S) = i} \min_{0 \neq v \in S} \frac{v^\top G^\top Y Y^\top G v}{v^\top v} \\
	&= \max_{\dim(S) = i} \min_{0 \neq v \in S} \frac{v^\top G^\top Y Y^\top G v}{v^\top G^\top G v} \cdot \frac{v^\top G^\top G v}{v^\top v} \\
	&\leq \max_{\dim(S) = i} \min_{0 \neq v \in S} \norm{Y^\top} \frac{v^\top G^\top G v}{v^\top v} \\
	&= \max_{\dim(S) = i} \min_{0 \neq v \in S} \frac{v^\top G^\top G v}{v^\top v} \\
	&= \lambda_i(G^\top G)
	\end{aligned}
	\end{equation}
	where the operator norm satisfies $\norm{Y^\top}=1$.
	\end{proof}

	\begin{lemma}
	Let $G \in \mathbb{R}^{r \times d}, Y \in \Stiefel(d,r)$, and define $A = \frac{1}{2}(G^\top Y + Y^\top G)$.
	\begin{equation}
	\tr(A)^2 - \norm{A}^2_F \leq \norm{G}^2_* - \norm{G}^2_F
	\label{eq:matrix_norm_inequality}
	\end{equation}
	\label{lem:matrix_norm_inequality}
	\end{lemma}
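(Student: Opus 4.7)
The plan is to recast both sides in terms of the $d\times d$ matrix $M \triangleq G^\top Y$ and then chain bounds using Lemmas~\ref{lem:lambda_sigma_inequalities} and~\ref{lem:singular_value_inequality}. Since $A = \tfrac{1}{2}(M + M^\top)$, we have $\tr(A) = \tr(M)$ and, upon expanding $\norm{M+M^\top}_F^2$, $\norm{A}_F^2 = \tfrac{1}{2}\norm{M}_F^2 + \tfrac{1}{2}\tr(M^2)$. This gives the decomposition
\begin{equation*}
\tr(A)^2 - \norm{A}_F^2 \;=\; \bigl[\tr(M)^2 - \tr(M^2)\bigr] \;+\; \tfrac{1}{2}\bigl[\tr(M^2) - \norm{M}_F^2\bigr],
\end{equation*}
which isolates two terms each amenable to a distinct lemma.

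Next, let $\mu_1,\dots,\mu_d$ denote the (possibly complex) eigenvalues of $M$; because $M$ is a real matrix, both $\tr(M) = \sum_i \mu_i$ and $\tr(M^2) = \sum_i \mu_i^2$ are real. The second bracket above is nonpositive by Lemma~\ref{lem:lambda_sigma_inequalities}(a) at $p=2$, since $\tr(M^2) = \sum\mu_i^2 \leq \sum|\mu_i|^2 \leq \sum\sigma_i(M)^2 = \norm{M}_F^2$. For the first bracket, the identity $\tr(M)^2 - \tr(M^2) = 2\sum_{i<j}\mu_i\mu_j$ shows that this quantity is real, so the triangle inequality followed by Lemma~\ref{lem:lambda_sigma_inequalities}(b) yields
\begin{equation*}
\tr(M)^2 - \tr(M^2) \;\leq\; 2\sum_{i<j}|\mu_i||\mu_j| \;\leq\; 2\sum_{i<j}\sigma_i(M)\sigma_j(M) \;=\; \norm{M}_*^2 - \norm{M}_F^2.
\end{equation*}
Combining these bounds gives $\tr(A)^2 - \norm{A}_F^2 \leq \norm{M}_*^2 - \norm{M}_F^2$.

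Finally, Lemma~\ref{lem:singular_value_inequality} provides $\sigma_i(M) \leq \sigma_i(G)$ for every $i \in [d]$. Because all singular values are nonnegative, this pointwise bound lifts term-by-term to $2\sum_{i<j}\sigma_i(M)\sigma_j(M) \leq 2\sum_{i<j}\sigma_i(G)\sigma_j(G)$, i.e., $\norm{M}_*^2 - \norm{M}_F^2 \leq \norm{G}_*^2 - \norm{G}_F^2$. Chaining with the previous step closes the proof.

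The step I expect to require the most care is the bound on the first bracket: $M = G^\top Y$ is generally non-symmetric, so its eigenvalues $\mu_i$ can be complex, whereas Lemma~\ref{lem:lambda_sigma_inequalities}(b) is stated in terms of absolute values. The unblocking observation is that $\tr(M)^2 - \tr(M^2) = 2\sum_{i<j}\mu_i\mu_j$ is real (because $M$ is real), so it can be upper-bounded by its modulus and then, via the triangle inequality, by $2\sum_{i<j}|\mu_i\mu_j|$, at which point Lemma~\ref{lem:lambda_sigma_inequalities}(b) applies directly. Once this is handled, the rest is bookkeeping against Lemmas~\ref{lem:lambda_sigma_inequalities}(a) and~\ref{lem:singular_value_inequality}.
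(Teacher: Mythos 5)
Your proof is correct and follows essentially the same route as the paper: the splitting into the two brackets is just a reorganization of the paper's step $\norm{A}_F^2 \geq \tr(G^\top Y G^\top Y)$ (Lemma~\ref{lem:lambda_sigma_inequalities}(a) with $p=2$), followed by the identical chain through Lemma~\ref{lem:lambda_sigma_inequalities}(b) and Lemma~\ref{lem:singular_value_inequality}. Your explicit handling of the possibly complex eigenvalues of $M=G^\top Y$ is a nice touch of extra care, but the argument is the same.
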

	\begin{proof}
	First, note that
	\begin{equation}
	\begin{aligned}
	\norm{A}^2_F &= \frac{1}{2}\tr(G^\top Y G^\top Y) + \frac{1}{2} \norm{G^\top Y}_F^2 \\
	&= \frac{1}{2} \sum_{i=1}^{d} \lambda_i^2(G^\top Y) + \frac{1}{2} \sum_{i=1}^{d} \sigma_i^2(G^\top Y) \\
	&\geq \tr(G^\top Y G^\top Y)
	\end{aligned}
	\label{eq:A_norm_inequality}
	\end{equation}
	where the inequality holds by invoking (\ref{eq:lambda_sigma_inequality}) with $p=2$. Starting from (\ref{eq:A_norm_inequality}), the left hand side of (\ref{eq:matrix_norm_inequality}) can be upper bounded by,
	\begin{equation}
	\begin{aligned}
	\tr(A)^2 - \norm{A}^2_F &\leq \tr(G^\top Y)^2 - \tr(G^\top Y G^\top Y) \\
	&= 2\sum_{1 \leq i < j \leq d} \lambda_i(G^\top Y)\lambda_j(G^\top Y) \\
	&\leq 2\sum_{1 \leq i < j \leq d} |\lambda_i(G^\top Y)\lambda_j(G^\top Y)| \\
	&\leq 2\sum_{1 \leq i < j \leq d} \sigma_i(G^\top Y)\sigma_j(G^\top Y) \\
	&\leq 2\sum_{1 \leq i < j \leq d} \sigma_i(G)\sigma_j(G) \\
	&= \norm{G}^2_* - \norm{G}^2_F
	\end{aligned}
	\label{eq:LHS_inequality}
	\end{equation}
	Above, the third inequality uses part (b) of
	Lemma~\ref{lem:lambda_sigma_inequalities},
	and the fourth inequality uses Lemma~\ref{lem:singular_value_inequality}.
	\end{proof}

\end{appendices}

\end{document}